
\documentclass[a4paper]{article}
\usepackage[applemac]{inputenc}
\usepackage{amsmath}
\usepackage{amsthm}
\usepackage{amssymb} 

\theoremstyle{plain}
\newtheorem{thm}{Theorem}
\newtheorem{cor}[thm]{Corollary}
\newtheorem{lem}[thm]{Lemma}
\newtheorem{clm}{Claim}

\theoremstyle{remark}
\newtheorem*{rem*}{Remark}

\theoremstyle{definition}

\newtheorem*{exm*}{Example}

\DeclareMathOperator{\coloneq}{\mathrel{\mathop:}=}
\DeclareMathOperator{\tw}{tw}
\DeclareMathOperator{\ord}{ord}

\newcommand{\D}{\ensuremath{\mathcal{D}}}

\newcommand{\W}{\ensuremath{\mathcal{W}}}

\renewcommand{\P}{\ensuremath{\mathcal{P}}}

\newcommand{\boundary}{\partial}

\begin{document}

\title{Linear Connectivity Forces Large Complete Bipartite Minors: the Patch for the Large Tree-Width Case}
\author{Jan-Oliver Fröhlich \and Theodor Müller}
\date{University of Hamburg\\June 2009}
\maketitle


\abstract{The recent paper `Linear Connectivity Forces Large Complete Bipartite Minors' by Böhme et al.\ relies on a structure theorem for graphs with no $H$-minor.
The sketch provided of how to deduce this theorem from the work of Robertson and Seymour appears to be incomplete.
To fill this gap, we modify the main proof of that paper to work with a mere restatement of Robertson and Seymour's original results instead.}

\section{Introduction}
Robertson and Seymour proved several variants of a structure theorem for graphs with no $H$-minor.
The version in~\cite{gm17} roughly says that any pair of such a graph $G$ and a tangle of sufficiently high order in $G$ has a near-embedding in some surface in which $H$ cannot be embedded.
This structure theorem guarantees that no bag of a vortex decomposition of the near-embedding contains a large side of any separation in the given tangle.
In their recent paper~\cite{kak}, Böhme et al.\ propose a strengthening of the above structure theorem for graphs of large tree-width, and make this the cornerstone of the proof of their main result.

Recall that a graph $G$ of large tree-width must  contain a large wall as a minor (cf.~\cite{gm5}).
The ``location'' of this wall can be encoded into a large-order tangle in $G$.
Applying the Robertson-Seymour structure theorem to $G$ and this tangle, the authors of~\cite{kak} wish to conclude not only that no vortex bag contains a large side of a tangle separation as the theorem states, but even that no vortex contains large parts of the wall, and that therefore there must be a large subwall essentially embedded in the surface.
However, these last claims are not backed up by a rigorous proof.

Our aim is to fill the gap between the work of Robertson and Seymour and the proof of the main result of Böhme et al.\ in~\cite{kak}.
We assume the reader to be familiar with the latter paper.
Any references to its sections, theorems, etc.\ will be \emph{emphasized} to distinguish them from references within our paper.

Instead of proving the structure theorem (\emph{Theorem~4.2}) of~\cite{kak} in general, we modify the proof of the main result (\emph{Theorem~1.1}) to work with the original structure theorem of Robertson and Seymour itself, restated for readability in a terminology similar to~\cite{kak}.
Our patch will thus be a short argument which fits smoothly into the proof of \emph{Theorem~1.1}.

We remark that, in the meantime, \emph{Theorem~4.2} has indeed been found to be true.
But it is non-trivial to prove even on the basis of the work of Robertson and Seymour.
A proof of a strengthened general version will be published separately~\cite{newstructure}.

Here is an overview of the layout of our note.
In Section~2, we present the Robertson-Seymour structure theorem we need, in terminology close to~\cite{kak}.
Section~3 contains the tools necessary to exploit that structure theorem in the setting of \emph{Theorem 1.1}.
In Section~4, we redo the proof of \emph{Theorem~1.1} up to point where a wide vortex is found (\emph{Claim~5.3}).
After that, no further modifications are required.

\section{The Structure Theorem}
There are several variants of the structure theorem for graphs with no $H$-minor in the `Graph Minors' series.
We shall use the one in~\cite{gm17}.
To state it requires a fair amount of preparation.
Note that Böhme et al.\ based their structure theorem on~\cite{gm17} as well. 

A pair $(G, \Omega)=:V$ of a graph $G$ and a linearly ordered subset $\Omega\subseteq V(G)$ is called a \emph{vortex}. 
The vertices $\Omega$ are the \emph{society vertices} of the vortex and their number $|\Omega|$ is its \emph{length}. For convenience we also write $G$ for the vortex. 
The vertices in $V(G)\setminus \Omega$ are the \emph{inner} vertices of~$V$. 
A vortex without inner vertices is \emph{trivial}. 
We enumerate a society $\Omega=\{w_1,\ldots, w_n\}$ always according to its order, i.e.\ $w_1<w_2<\ldots <w_n$.

A path-decomposition $\D = (X_1,\ldots,X_m)$ of $G$ is a \textit{decomposition of~$V$} if $m=n$ and $w_i\in X_i$ for all~$i$. The $X_i$ are the \emph{bags} or \emph{parts} of the decomposition.

Given a decomposition $\D = (X_1,\ldots,X_m)$ of a vortex $V$ as above. For all $1 \leq i< n$, we write ${Z_i:=(X_{i}\cap X_{i+1})\setminus \Omega}$ and call $\D$ \emph{linked} if 
\begin{itemize}
	\item all these $Z_i$ have the same size, $q$ say;
	\item there are $q$ disjoint $Z_{i-1}$--$Z_{i}$ paths in~$G[X_i]-\Omega$, for all $1<i<n$;
	\item $X_i \cap \Omega = \{w_{i-1}, w_{i}\}$ for $1\leq i\leq  n$, where $w_0:= w_1$.
\end{itemize}
Note that $X_i\cap X_{i+1} = Z_i \cup \{w_i\}$, for all $1\le i < n$

The union of the $Z_{i-1}$--$Z_{i}$ paths in a linked decomposition of $V$ is a disjoint union of $X_1$--$X_n$ paths in~$G$; we call the set of these paths a \emph{linkage} of~$V$ with respect to $(X_1,\ldots,X_m)$.

The union of a path $P$ with some mutually disjoint paths, having precisely their first vertex on $P$, is a \emph{comb}; 
the last vertices of those paths are the \emph{teeth} of this comb.

A graph $G$ is said to be $\alpha$-\emph{near-embeddable} in some surface $\Sigma$ if there is a subset $A\subseteq V(G)$ of at most $\alpha$ vertices, the \emph{apex} set, such that $G-A$ is an edge-disjoint union of subgraphs $G_0,\ldots, G_n$ of $G$ with integers $0\leq\alpha'\leq \alpha\leq n$ such that:
\begin{enumerate}
	\item The pairs $(G_i, \Omega_i)$ where $\Omega_i\coloneq V(G_i\cap G_0)$ with some linear order are non-trivial vortices and different vortices overlap only in $G_0$: we have $G_i\cap G_j\subseteq G_0$ for $i\neq j$.
	\item $G_1, \ldots, G_{\alpha'}$ are disjoint and have linked decompositions of adhesion at most $\alpha$. These are the \emph{large} vortices and will be denoted by $\mathcal{V}$. For each of these vortices we fix a linked decomposition together with a linkage and whenever we refer to the decomposition or the linkage of a given large vortex, we shall mean these fixed ones.
	\item The remaining vortices $G_{\alpha'+1},\ldots, G_n$ have length at most $3$. They are called \emph{small} vortices\footnote{These small vortices $W\in\W$ represent subgraphs in \cite{kak} that are split off of $G$ along separators of order at most $3$ in `elementary reductions' while $\Omega(W)$ are the vertices `involved'  in this reduction.} and we denote them by $\mathcal{W}$.
	\item For each large vortex $G_j$ there is a comb $K$ in $G_j\cup\bigcup\W$  which is disjoint to the linkage of $G_j$ and whose teeth are the society vertices $\Omega_j$ in the right order.
	\item There are closed discs in~$\Sigma$ with disjoint interiors $D_1,\ldots, D_n$ and an embedding ${\sigma: G_0 \hookrightarrow \Sigma-\bigcup_{i=1}^n D_i}$ such that $\sigma(G_0)\cap\boundary D_i = \sigma(\Omega_i)$ for all~$i$ and the generic linear ordering of $\Omega_i$ is compatible with the natural cyclic ordering of its image (i.e., coincides with the linear ordering of $\sigma(\Omega_i)$ induced by $[0,1)$ when $\boundary D_i$ is viewed as a suitable homeomorphic copy of $[0,1]/\{0,1\}$). 
\end{enumerate}
The tuple $(\sigma, A, G_0, \mathcal{V}, \mathcal{W})$ is an $\alpha$-\emph{near-embedding} or just \emph{near-embedding} of $G$ in $\Sigma$. A near-embedding with apex set $A$ is said to \emph{respect} a tangle $\mathcal{T}$ if no large side of any separation in $\mathcal{T}\setminus A$ is contained in a vortex $W\in\mathcal{W}$ or in a bag of the decomposition of a vortex $V\in\mathcal{V}$. 

We will combine results from~\cite{gm17}, namely the structure theorem (13.4) and two lemmas~(9.1) and (9.8) to show the following 
\begin{thm}[structure theorem]\label{structure}
	For every graph $H$ there exist non-negative integers $\theta$ and $\alpha$ such that the following holds: let $G$ be a graph not containing $H$ as a minor and $\mathcal{T}$ any tangle of order at least $\theta$ in $G$. Then $G$ has an $\alpha$-near-embedding into a surface $\Sigma$ in which $H$ cannot be embedded and this near-embedding respects $\mathcal{T}$.
\end{thm}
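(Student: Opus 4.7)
The plan is to derive Theorem~\ref{structure} by invoking the Robertson--Seymour structure theorem~(13.4) of~\cite{gm17} and repackaging its output in the formalism of Section~2, using lemmas~(9.1) and~(9.8) of the same paper to reshape the path-decompositions of the vortices into linked ones of controlled adhesion.

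First I would apply~(13.4) with its parameters chosen large enough to accommodate our target $\alpha$ and $\theta$. This directly yields an apex set $A$ with $|A|\le\alpha$, a surface $\Sigma$ in which $H$ does not embed, an embedding of a subgraph $G_0\subseteq G-A$ in $\Sigma$, and a splitting of $(G-A)-G_0$ into vortices attached in pairwise disjoint closed discs around the faces of $G_0$. Vortices of length at most $3$ go into $\W$; the rest will form $\V$. For each large vortex I would then apply~(9.1) to convert its given path-decomposition into a linked one without increasing the adhesion, and~(9.8) to bound the resulting adhesion by $\alpha$. The $q$ internally disjoint $Z_{i-1}$--$Z_i$ paths produced in this step assemble into the linkage required by item~(2). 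Items (1), (3) and~(5) follow directly from~(13.4); item~(4), a comb inside $G_j\cup\bigcup\W$ reaching the society vertices of $G_j$ in their correct order and disjoint from the linkage, is built by running its spine through the small vortices that attach $G_j$ along its disc boundary and through parts of $G_j$ outside the linkage, using~(5) to match linear and cyclic orderings.

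The main obstacle is the \emph{respect} condition, which is not part of the conclusion of~(13.4). I would enforce it by enlarging the preliminary tangle-order threshold furnished by~(13.4) to some $\theta$ strictly exceeding both the maximum possible size of a bag of a linked decomposition of adhesion at most $\alpha$ (bounded in terms of $\alpha$ once~(9.1) and~(9.8) are applied) and the length~$3$ of any small vortex. Indeed, if the large side of some $(X,Y)\in\T\setminus A$ were contained in a bag $X_i$ or a small vortex $W$ of size $k<\theta$, then that containing set itself would act as a separator of $G-A$ of order below $\theta$, and combining it with $(X,Y)$ via the tangle axioms produces a contradiction with the requirement that the small sides of three tangle separations cannot cover $V(G-A)$. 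This translation step is the delicate point and, together with~(9.1) and~(9.8), determines the final value of~$\theta$.
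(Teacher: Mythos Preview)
Your plan misidentifies the obstacle and contains a genuine error in the part you single out as ``the delicate point''.

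First, the respect condition is \emph{not} the issue. Theorem~(13.4) of~\cite{gm17} produces a \emph{true} portrayal of the pair $(G,\T)$; the tangle enters the hypotheses, and the portrayal axioms (in particular~(P7) together with the definition of ``true'') already encode that no $\alpha(c)$ contains the large side of a separation in $\T\setminus A$. This is exactly what translates into our respect condition. So no extra argument is needed there, and the paper accordingly spends none on it.

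Second, your proposed argument for respect is broken regardless: you assert that ``the maximum possible size of a bag of a linked decomposition of adhesion at most~$\alpha$'' is bounded in terms of~$\alpha$. It is not. Adhesion bounds only the overlaps $X_i\cap X_{i+1}$; the bags themselves may be arbitrarily large (this is precisely what distinguishes a vortex from a graph of bounded path-width). Hence you cannot separate a bag from the rest of $G-A$ by a set of order below~$\theta$, and the tangle-axiom contradiction you sketch does not go through.

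Third, you miss the actual technical work. The output of~(13.4) attaches to each cuff a graph with a \emph{cyclic} structure: the border cells $c_0,\ldots,c_n$ wrap around the cuff, and the corresponding sets $X_i=V(\alpha(c_i))$ form a circular rather than a linear decomposition. Lemmas~(9.1) and~(9.8) do not merely ``link'' a given path-decomposition; they provide, for each cuff, equal-size sets $\beta(w_i)$ and systems of disjoint paths $P_0^i,\ldots,P_q^i$ between consecutive $\beta(w_{i-1}+)$ and $\beta(w_i+)$. To break the cycle into a path one must \emph{delete} the set $Z=\beta(w_0+)$ (of size at most~$r$) and add it to the apex set; only then do the $X_i\setminus Z$ form a genuine linked decomposition of a vortex on $\{w_1,\ldots,w_n\}$, with the paths $P_1,\ldots,P_q$ (restricted) as linkage. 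Doing this for up to~$r$ cuffs costs up to $r^2$ extra apex vertices, which is why the final constant is $\alpha=r+r^2$ rather than~$r$. Your outline never performs this cut and never accounts for the enlarged apex set.

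Finally, the comb of item~(4) is not assembled ad hoc from small vortices along the disc boundary: it is the path $P_0$ obtained as the union of the $P_0^i$ from~(9.1), which by that lemma runs through the border nodes $w_i$ in order and avoids the linkage paths $P_1,\ldots,P_q$.
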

In the remainder of this section, we will show how to deduce Theorem~\ref{structure} from the results mentioned above. For this purpose the readers should be familiar with the concepts of \cite{gm17} such that they understand the statement (13.4). In particular, we will explicitly use the functions $\alpha$ and $\beta$ which are part of the definition of a \emph{portrayal}. 


The application of (13.4) yields a portrayal which translates directly  in our concept of a near-embedding---with one exception: 
Some technical arguments are necessary to transform the subgraphs bordering a cuff in the surface, which have a `circular' structure, into large, linked vortices, which have by their decompositions a `linear' structure. Clearly, there is an integer $r$ bounding all the constants given by  (13.4): The `apex' set $Z_0\subseteq V(G)$ has size at most $r$, and the portrayal of $G-Z_0$ of warp at most $r$ lives in a surface having most $r$ cuffs. For our intended conversion we need to delete additional $r^2$ vertices from $G$. Thus, with $\alpha:=r+r^2$, we will eventually obtain an $\alpha$-near-embedding of $G$ with a larger apex set of size at most $\alpha$.

Let us consider all border nodes $(w_0,\ldots,w_n)$ of a given cuff, linearly ordered in a way compatible with their cyclic ordering on the cuff. Let $c_i$ be the border cell with ends $w_{i-1}$ and $w_i$ for $0\leq i \leq n$ and $w_{-1}:=w_n$. Further, let $X_i$ denote the vertex set of $\alpha(c_i)$ and $R$ be the graph $\bigcup_i \alpha(c_i)$. We will convert this graph $R$ into a large vortex for our near-embedding. 

Lemmas (9.8) and (9.1) tell us that $\beta(w_0),\ldots,\beta(w_n)$ all have the same size, $q\leq \alpha$ say, and for $0\leq i\leq n$ there are disjoint paths $P_0^i,\ldots,P_q^i$ connecting $\beta(w_{i-1}+)$ to $\beta(w_i+)$ with the following properties:
\begin{itemize}
\item All the paths $P_1^i,\ldots,P_q^i$ are contained in $\alpha(c_i)$.
\item $P_0^i$ connects $w_{i-1}$ and $w_i$ and avoids all other society vertices.
\item $P_0^i$ is either contained in $\alpha(c_i)$ or in $\alpha(c')$ for some internal cell $c'$ with $w_{i-1},w_i\in \tilde c'$.
\end{itemize}
Let us assume that the paths are enumerated such that for $1\leq k \leq q$ and $1\leq i \leq n$ the last vertex of $P_k^{i}$ is the initial vertex of  $P_k^{i+1}$.

Let $P_0$ denote the union of the paths $P_0^2,\ldots,P_0^{n}$. This graph contains a comb which we will need for our construction to satisfy property~4 of near-embeddable. 

For each $k=1,\ldots,q$, let $P_k$ be the union of the paths $P_k^0,\ldots,P_k^n$ and let $\P$ denote the set of these $P_k$.  
We may assume that $|P_k|=1$ if and only if $k>q'$ for some $q'\leq q$. 
For $1\leq k\leq q'$ the graphs in $\P$ are paths or cycles 
and we regard them as oriented according to the order in which they traverse the $\alpha(c_i)$. Now, each vertex $v\in\beta(w_0)$ not lying on some trivial path in $\P$ is either contained in one cycle or in two paths and thus, has a unique successor $v_s$ and a unique predecessor $v_p$ in $\P$. 
Now $Z:=\beta(w_0+)$ is a set of at most $\alpha$ vertices; 
deleting $Z$ from $H$ yields a graph $H'$ and with (P6) and (P7) from the definition of a portrayal it follows that $(X_1\setminus Z,\ldots,X_{n-1}\setminus Z,(X_n\cup X_0)\setminus Z)$ is a decomposition of the vortex $(H',\{w_1,\ldots,w_n\})$ of adhesion at most $q'$. 

With $\P':=\{P_i-Z \mid 1\leq i \leq q'\}$ we have a system of disjoint paths connecting the adhesion sets $Z_i$ of our decomposition. 
However, by the deletion of $Z$, some of the adhesion sets  $Z_i$ might be strictly smaller than $q'$ as the corresponding sets $\beta(w_i)$ also could have contained a vertex $v\in Z$. We can solve this problem by adding either $v_s$ or $v_p$ to $X_i$ and achieve that $\P'$ is a linkage of our vortex. It is easy to see with (9.1) that $P_0-Z$ contains a comb with teeth $\{w_1,\ldots,w_n\}$.

We have deleted up to $r$ vertices for this construction which is necessary for up to $r$ cuffs. Thus, adding all these  up to $r^2$ vertices to the apex set, Theorem~\ref{structure} holds for $\alpha=r^2+r$.


\section{Preparations}
To take full advantage of Theorem~\ref{structure}, we need a suitable tangle $\mathcal{T}$ in the graph $G$ as input.
Suitable means in our case that $\mathcal{T}$ has high order and encodes the location of a large grid minor in $G$.
The interplay of tangles, grids and grid minors is the subject of this section.

For a positive integer $r$ let $W_r$ be the grid on $r^2$ vertices.
More precisely, $W_r$ has vertices $V(W_r) \coloneq \{(i,j): 1\leq i,j\leq r\}$ and there is an edge between two vertices $(i,j)$ and $(i',j')$ of $W_r$ if and only if $|i-i'| + |j-j'| = 1$.
We call $\{1,\ldots, r\}\times \{j\}$ the $j$th \emph{column} of $W_r$ for $1\leq j\leq r$.
Similarly, $\{i\}\times \{1,\ldots r\}$ is the $i$th \emph{row}.
Uniting any row with any column gives a \emph{cross}.
The graph $W_r$ is the $r$-\emph{grid}.

Our first statement is that, for any positive integer $r$, the $r$-grid gives rise to a tangle of order $r$ in a canonical way.
Consider the set $\mathcal{T}$ of all separations $(A,B)$ of order less that $r$ in $W_r$ such that $B$ contains a cross.
According to Robertson and Seymour~\cite[(7.3)]{gm10}, this is indeed a tangle of order $r$ in $W_r$.
We refer to this tangle as the \emph{natural} tangle of $W_r$.

Intuitively, a small side in any separation of a natural tangle cannot contain many vertices without causing the separator to be large.
We formalize this fact in
\begin{lem}\label{gridcutW}
	Each separation $(A,B)$ in the natural tangle of some grid satisfies $|A|\leq s^2$ where $s\coloneq \ord (A,B)$.
\end{lem}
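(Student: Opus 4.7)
The plan is to use the fact that $B$ contains a cross to force every row and every column which meets $A$ to also meet the separator $A \cap B$. Once this is established, a simple counting argument yields the bound.

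First I would fix the data: write $s = |A \cap B|$, and use the defining property of the natural tangle to pick a row $R_{i_0}$ and a column $C_{j_0}$ whose union is a cross contained in $B$. So every vertex of $R_{i_0} \cup C_{j_0}$ lies in $B$.

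The central step is the following claim: any row $R_i$ that contains a vertex of $A$ must already contain a vertex of $A \cap B$; and the analogous statement holds for columns. To prove it, suppose $v \in R_i \cap A$ and $R_i \cap (A \cap B) = \emptyset$, aiming at a contradiction. Then $R_i$ is partitioned into $R_i \cap (A \setminus B)$ and $R_i \cap (B \setminus A)$, and since the separation property forbids edges between these two parts, one of them is empty (as $R_i$ is a connected path). Since $v$ forces $R_i \cap (A \setminus B) \neq \emptyset$, the whole row would then lie in $A \setminus B$. But the unique vertex $u = R_i \cap C_{j_0}$ lies in $C_{j_0} \subseteq B$, so $u \in A \cap B$ (impossible, as we assumed $R_i$ avoids $A \cap B$) or $u \in B \setminus A$ (contradicting $R_i \subseteq A \setminus B$). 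The argument for columns is symmetric, using the row $R_{i_0}$.

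With the claim in hand, the counting is immediate: the rows meeting $A$ are among those meeting $A \cap B$, of which there are at most $|A \cap B| = s$; similarly at most $s$ columns meet $A$. Hence $A$ is contained in the product of an $s$-element set of rows with an $s$-element set of columns, giving $|A| \leq s^2$.

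The only delicate point is the key claim, where one must carefully use both that rows and columns are connected and that the cross in $B$ prevents a row (or column) that meets $A \setminus B$ from being entirely swallowed by $A \setminus B$. Once that is done, the rest is bookkeeping.
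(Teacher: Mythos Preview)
Your proof is correct and follows essentially the same approach as the paper: both arguments use the cross in $B$ to show that every row (respectively column) meeting $A$ must also meet the separator $A\cap B$, and then conclude $|A|\le s^2$ from the resulting bound on the number of such rows and columns. The paper phrases the key step as ``each such row contains an $A$--$B$ path, so the separator meets it,'' while you unpack this via a short contradiction using connectivity of the row; the content is the same.
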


\begin{proof}
	Let $(A,B)$ be any separation in the natural tangle $\mathcal{T}$ of $W_r$.
	Let $s$ be the order of $(A,B)$.
	Denote by $I$ the set of all numbers $i$ such that $A$ has a vertex in the $i$th row.
	Similarly, let $J$ be the set of all $j$ such that $A$ has a vertex in the $j$th column.
	$B$ is large and therefore contains a cross.
	Hence the rows with index in $I$ contain\footnote{In a slight abuse of terminology we speak of a subgraph being contained in a vertex set.} $|I|$ disjoint $A$--$B$ paths.
	And the columns with index in $J$ contain $|J|$ disjoint $A$--$B$ paths.
	The separator $A\cap B$ must clearly have a vertex on each of those paths.
	This means $|I|, |J|\leq s$. But $A\subseteq I\times J$ and thus $|A|\leq |I| |J| \leq s^2$.
\end{proof}

Robertson and Seymour also provide a general way of extending a tangle $\mathcal{T}'$ in any minor $H$ of $G$ to a tangle $\mathcal{T}$ in $G$.
Although we present their construction in general, we shall use it only in the case where $H$ is a grid and $\mathcal{T}'$ its natural tangle.
The following two lemmas are based on~\cite[(6.1)]{gm10}.
They are both straightforward to verify so we spare the proofs.
Let $\mathcal{T}'$ be a tangle in some graph $H$ and $G$ any graph containing $H$ as a minor, witnessed by branch sets $V_h$ with $h\in V(H)$.
Then separations of $G$ induce separations of $H$:
\begin{lem}\label{inducedseparation}
	For any separation $(A,B)$ in $G$ the pair
	\[(A',B')\coloneq \left(\{h\in V(H): V_h\cap A\neq\emptyset\}, \{h\in V(H): V_h\cap B\neq\emptyset\}\right)\]
	is a separation of at most the same order as $(A,B)$ in $H$.
\end{lem}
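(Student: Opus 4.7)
The plan is to verify the two conditions that make $(A',B')$ a separation of $H$, namely that $A'\cup B'=V(H)$ and that no edge of $H$ has one endpoint in $A'\setminus B'$ and the other in $B'\setminus A'$, and then to bound the order. For the first coverage condition, I would simply note that every branch set $V_h$ is nonempty and contained in $V(G)=A\cup B$, so each $h\in V(H)$ lies in $A'$ or $B'$.

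For the edge condition, I would argue by contradiction: suppose $hh'\in E(H)$ with $h\in A'\setminus B'$ and $h'\in B'\setminus A'$. Then $V_h\subseteq A\setminus B$ and $V_{h'}\subseteq B\setminus A$. Since $hh'$ is an edge of the minor $H$, there is an edge of $G$ with one endpoint in $V_h$ and the other in $V_{h'}$, i.e.\ an edge of $G$ from $A\setminus B$ to $B\setminus A$, contradicting that $(A,B)$ is a separation of $G$. Hence $(A',B')$ is a separation of $H$.

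For the bound on the order, the key observation is that for each $h\in A'\cap B'$ the branch set $V_h$ must contain a vertex of $A\cap B$. Indeed, $V_h$ meets both $A$ and $B$ by definition, and if $V_h$ were disjoint from $A\cap B$, then $V_h$ would decompose into its intersections with $A\setminus B$ and $B\setminus A$, both nonempty, with no edge of $G[V_h]$ crossing between them; this contradicts the connectedness of $V_h$. Choosing one such vertex per $h\in A'\cap B'$ yields an injection of $A'\cap B'$ into $A\cap B$ by disjointness of the branch sets, so $\ord(A',B')\le \ord(A,B)$. I do not foresee any real obstacle; the only subtlety worth a sentence is the appeal to connectedness of the branch sets in ruling out that $V_h$ sits entirely in $(A\setminus B)\cup(B\setminus A)$.
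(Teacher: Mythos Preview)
Your argument is correct and is precisely the standard verification one would expect. The paper itself omits the proof entirely, declaring the lemma ``straightforward to verify'' and citing~\cite[(6.1)]{gm10}; your write-up supplies exactly the routine check the authors skipped, including the one point that actually requires a moment's thought (using connectedness of the branch sets to force $V_h\cap A\cap B\neq\emptyset$ whenever $h\in A'\cap B'$).
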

The separation $(A',B')$ is said to be \emph{induced} in $H$ by $(A,B)$.
This enables us to (uniquely) extend $\mathcal{T}'$ to a tangle $\mathcal{T}$ of the same order in $G$:
\begin{lem}\label{extendedtangle}
	Let $\mathcal{T}$ be the set of all separations $(A,B)$ in $G$ of order less than $\ord\mathcal{T}'$ such that the induced separation $(A', B')$ lies in $\mathcal{T}'$.
	Then $\mathcal{T}$ forms a tangle of order $\ord\mathcal{T}'$ in $G$.
\end{lem}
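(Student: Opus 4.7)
My plan is to verify the three tangle axioms (T1), (T2), (T3) for $\mathcal{T}$, reducing each in turn to the corresponding axiom for $\mathcal{T}'$ via the induced-separation map of Lemma~\ref{inducedseparation}. Two elementary observations will drive the reduction. First, the assignment $(A,B)\mapsto(A',B')$ commutes with swapping sides, i.e.\ $(B,A)$ induces $(B',A')$; this is immediate from the symmetric defining formula. Second, if sets $A_1,\dots,A_k\subseteq V(G)$ cover $V(G)$, then their projections $A_1',\dots,A_k'$ cover $V(H)$: each branch set $V_h$ is nonempty and therefore meets some $A_i$, forcing $h\in A_i'$.

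For (T1), take any separation $(A,B)$ of $G$ of order less than $\ord\mathcal{T}'$. Lemma~\ref{inducedseparation} yields an induced separation $(A',B')$ of $H$ of order less than $\ord\mathcal{T}'$, so (T1) applied to $\mathcal{T}'$ places exactly one of $(A',B')$, $(B',A')$ into $\mathcal{T}'$. By the first observation above, exactly one of $(A,B), (B,A)$ therefore lies in $\mathcal{T}$, as required.

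For (T2), suppose for contradiction that $(A_i,B_i)\in\mathcal{T}$ for $i=1,2,3$ with $A_1\cup A_2\cup A_3=V(G)$. The second observation then gives $A_1'\cup A_2'\cup A_3'=V(H)$, contradicting (T2) for $\mathcal{T}'$. Axiom (T3) -- that no $(A,B)\in\mathcal{T}$ satisfies $A=V(G)$ -- follows from exactly the same observation applied to a single separation.

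The argument is really only a bookkeeping exercise, and it is easy to see why the authors chose to spare its details. The single subtle point is the second observation: it relies on the standing convention that the branch sets $V_h$ of a minor are nonempty. Were one to forget this, the projection $A\mapsto A'$ might fail to preserve covers and both (T2) and (T3) would collapse. That is the only place where I would insist on a brief explicit justification.
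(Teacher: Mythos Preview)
The paper gives no proof here at all, deeming both this lemma and Lemma~\ref{inducedseparation} ``straightforward to verify'' and citing~\cite[(6.1)]{gm10}. Your plan---verify the tangle axioms for~$\mathcal{T}$ by pushing each separation down to~$H$ via the induced-separation map---is the standard argument and is correct in outline; your treatment of (T1) and (T3) is fine.

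There is, however, a real slip in your handling of (T2). In the Robertson--Seymour sense (which is the one the paper invokes via~\cite{gm10}), axiom (T2) forbids $G[A_1]\cup G[A_2]\cup G[A_3]=G$, i.e.\ the three small sides together covering every \emph{edge} of~$G$; it does \emph{not} forbid the vertex sets alone from covering~$V(G)$. These conditions are genuinely different: the path $a\text{--}b\text{--}c\text{--}d$ carries a tangle of order~$2$ in which $\{a,b\}$ and $\{c,d\}$ are both small sides, and together they cover all four vertices while missing the edge~$bc$. So your inference ``$A_1'\cup A_2'\cup A_3'=V(H)$, contradicting (T2) for~$\mathcal{T}'$'' is unjustified: a tangle~$\mathcal{T}'$ need not satisfy the vertex-cover version you are appealing to. The repair is easy and stays within your framework. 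Assume instead that $G[A_1]\cup G[A_2]\cup G[A_3]=G$. Your second observation still gives $A_1'\cup A_2'\cup A_3'=V(H)$. For the edges, take any $hh'\in E(H)$ and a witnessing $G$-edge $vv'$ with $v\in V_h$, $v'\in V_{h'}$; this edge lies in some $G[A_i]$, so $v,v'\in A_i$, hence $h,h'\in A_i'$ and $hh'\in E(H[A_i'])$. Thus $H[A_1']\cup H[A_2']\cup H[A_3']=H$, and now (T2) for~$\mathcal{T}'$ really is contradicted. With this one-line addition your proof is complete, and the ``single subtle point'' is not the nonemptiness of branch sets but this edge-tracking step.
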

We call the tangle $\mathcal{T}$ the \emph{extension} of $\mathcal{T}'$ to $G$.
Now we are able to extend the natural tangle $\mathcal{T}'$ of $W_r\preccurlyeq G$ to a tangle $\mathcal{T}$ of order $r$ in $G$.
By the definition of the extension, Lemma~\ref{gridcutW} obviously carries over as

\begin{cor}\label{gridcutG}
	Let $G$ be any graph containing some grid $W$ as a minor.
	Then the small side of any separation in the extension of the natural tangle of $W$ intersects at most $s^2$ branch sets of $W$, where $s$ is the order of the separation.
\end{cor}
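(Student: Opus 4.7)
The plan is to combine the three preceding lemmas in the obvious order; the corollary is really just a bookkeeping exercise. Let $(A,B)$ be a separation of order $s$ in the extension $\mathcal{T}$ of the natural tangle $\mathcal{T}'$ of $W$ to $G$. First I would apply Lemma~\ref{inducedseparation} to $(A,B)$ to obtain the induced separation $(A',B')$ in $W$, of some order $s'\le s$. By the defining property of the extension (Lemma~\ref{extendedtangle}), $(A',B')\in\mathcal{T}'$; so $(A',B')$ is a separation in the natural tangle of the grid $W$, and in particular $A'$ is its small side.

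Next I would apply Lemma~\ref{gridcutW} directly to $(A',B')$ to conclude $|A'|\le (s')^2\le s^2$. The final step is to unpack the definition of the induced separation: by construction,
\[
A' \;=\; \{\,h\in V(W)\,:\,V_h\cap A\neq\emptyset\,\},
\]
so $|A'|$ is, by definition, the number of branch sets of $W$ meeting $A$. Chaining these three observations yields the bound claimed in the corollary.

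I do not anticipate any real obstacle; the only point worth flagging is that the order drops from $s$ to $s'$ when passing to the induced separation, but as we only need an upper bound this loss is harmless. This is presumably what the authors have in mind when they remark that Lemma~\ref{gridcutW} ``obviously carries over'' to the extended tangle in $G$.
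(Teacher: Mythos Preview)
Your proposal is correct and is precisely the argument the paper has in mind when it says that Lemma~\ref{gridcutW} ``obviously carries over'' by the definition of the extension; the paper gives no further proof. Your unpacking of the three lemmas in sequence is exactly how the corollary is meant to be read.
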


\section{Finding a Wide Vortex}
In this section we provide an alternative opening for the proof of the main theorem (\emph{Theorem~1.1}) in~\cite{kak}.
It avoids the use of \emph{Theorem~4.2} and applies Theorem~\ref{structure} instead.
As Böhme et al.\ point out, their structure theorem serves two purposes in the original proof:
Making the embedded graph large and providing the ``path''~$P_0$.
Both is necessary only to ensure the existence of an $n_2$-wide vortex (\emph{Claim~5.3}).
We shall redo the proof in detail up to that wide vortex.

\emph{Theorem~1.1} clearly follows from the bounded tree-width theorem (\emph{Theorem~3.1}) and
\begin{thm}[large tree-width]\label{main}
	For any positive integers $a,s$ and $k$, there exists a constant $w=w(a,s,k)$ such that every graph $G$ with
	\[\kappa(G)\geq 3a+2,\qquad\delta(G)\geq \frac{31}{2}(a+1)-3,\qquad\text{and}\qquad \tw(G) > w\]
	contains $s$ disjoint $K_{a,k}$-minors or a subdivision of $K_{a,sk}$.
\end{thm}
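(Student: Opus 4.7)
The plan is to follow the argument of \emph{Theorem 1.1} in \cite{kak} for the large tree-width case, but to have Theorem \ref{structure} stand in for the erroneous \emph{Theorem 4.2}. The width threshold $n_2$ and all subsequent constants are taken over from \cite{kak}; only the opening steps change, and the task reduces to showing that the hypotheses of Theorem \ref{main} still force an $n_2$-wide vortex (the conclusion of \emph{Claim 5.3}), after which the proof in \cite{kak} applies verbatim.

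First, I would choose $w = w(a,s,k)$ large enough that the Robertson--Seymour grid-minor theorem \cite{gm5} supplies an $r$-grid $W_r$ as a minor of $G$, with $r$ as large as required below. Let $\mathcal{T}'$ be the natural tangle of $W_r$ and $\mathcal{T}$ its extension to $G$ via Lemma \ref{extendedtangle}, a tangle of order $r$ in $G$. We may assume $G$ has no $K_{a,sk}$-minor, since otherwise the minimum-degree and connectivity hypotheses upgrade such a minor to a $TK_{a,sk}$-subdivision exactly as in \cite{kak}. With $r \geq \theta(K_{a,sk})$, Theorem \ref{structure} now yields an $\alpha$-near-embedding $(\sigma, A, G_0, \mathcal{V}, \mathcal{W})$ of $G$ in a surface $\Sigma$ in which $K_{a,sk}$ does not embed, and this near-embedding respects $\mathcal{T}$.

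The decisive step is to extract the wide vortex. For each bag $X_i$ of a large-vortex decomposition, the natural separation of $G$ cutting $X_i$ from the rest has order $O(\alpha)$ once the apex has been absorbed; the respecting property places the induced separation of $G-A$ in $\mathcal{T}\setminus A$ with $X_i\setminus A$ on its small side, and Corollary \ref{gridcutG} then bounds the number of branch sets of $W_r$ meeting $X_i$ by $O(\alpha^2)$. The same bound applies to each small vortex $W\in\mathcal{W}$ via its order-at-most-$(3+\alpha)$ separator. If every $V\in\mathcal{V}$ had length strictly less than $n_2$, summing the contributions of $A$, of the at most $\alpha$ large vortices, and of all grid-meeting small vortices would bound the total number of covered branch sets by a constant $N = N(a,s,k,\alpha,\Sigma)$; choosing $r$ so that $r^2 > N$ then forces some $V\in\mathcal{V}$ to have length at least $n_2$. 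The ``path'' $P_0$ required alongside this wide vortex is supplied by the comb of Property~4 of the near-embedding.

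The main obstacle in this counting is the small vortices in $\mathcal{W}$, whose total number Theorem \ref{structure} does not explicitly bound. The plan is to handle them by restricting to those that actually meet $W_r$ nontrivially and to exploit that each such vortex is attached via at most three society vertices lying in $G_0$; since branch sets are pairwise disjoint, only finitely many small vortices can host, or be straddled by, branch sets in a way that evades the apex. A secondary but routine difficulty is that the respecting condition applies to separations of $G-A$, so each separation of $G$ must be trimmed of its apex vertices before Corollary \ref{gridcutG} is applied, inflating the order of separations by at most $\alpha$.
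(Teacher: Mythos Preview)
Your counting argument is misdirected and does not yield the conclusion you want. Suppose every large vortex has length below $n_2$. Then, as you say, Corollary~\ref{gridcutG} bounds the number of branch sets of $W_r$ that meet $A$, a bag of some $V\in\mathcal{V}$, or some $W\in\mathcal{W}$. But this is not a bound on \emph{all} branch sets: the uncounted ones simply lie in $G_0$, and nothing prevents that. So no contradiction arises from $r^2>N$; what you have actually proved is that $|G_0|$ is large. This is precisely the content of the paper's Claim~\ref{largeG0}, and it is the \emph{starting point}, not the endpoint, of the wide-vortex argument.

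Even if your argument did produce a vortex of length at least $n_2$, that would not suffice. An $n_2$-\emph{wide} vortex is one with $n_2$ \emph{essential} society vertices, i.e.\ society vertices with fewer than $7$ neighbours in $G_0$; this is what the remainder of the proof in~\cite{kak} actually uses. Length alone says nothing about essentiality. The paper obtains essentiality by a separate Euler-formula step (Claim~\ref{widevortex}): once $|G_0|$ is large, the edge bound $\|G_0\|<3|G_0|+3ask$ forces many society vertices to have low $G_0$-degree, and pigeonhole puts $n_2$ of them into a single large vortex. Your proposal omits this step entirely.

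Finally, your treatment of the small vortices is a genuine gap, not a routine difficulty. The phrase ``only finitely many small vortices can host, or be straddled by, branch sets'' does not furnish a bound independent of $r$, and without one the sum $N$ is not a constant. The paper bounds $|\mathcal{W}|$ by $g=(sk-1)\binom{\alpha}{a}$ via Claims~\ref{apexnb} and~\ref{fewsmallvortices}, which use the minimum-degree and connectivity hypotheses on $G$ rather than anything about $W_r$; you should invoke these claims explicitly.
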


For given values of $a$, $s$ and $k$ set $H\coloneq sK_{a,k}$. Theorem~\ref{structure} yields constants $\alpha$ and $\theta$ such that for any graph $G$ with no $H$-minor and a tangle $\mathcal{T}$ of order at least $\theta$ there is an $\alpha$-near-embedding of $G$ into some surface $\Sigma$ into which $H$ cannot be embedded and this near-embedding respects~$\mathcal{T}$.
Without loss of generality we may assume $\alpha > 1$.
Let the constants $n_2$, $n_3$, $n_4$ and $n_5$ be chosen as in~\cite{kak}.
Set $g\coloneq (sk-1)\binom{\alpha}{a}$ and let $n_1$ be sufficiently large to make
\[\frac{1}{7}n_1 -2(a+1)g - ask \geq 3g + (n_2-1)\alpha\]
true.
Let the integer $r$ be large enough that
\[r\geq\theta, \qquad r  > 3\alpha \qquad\text{and}\qquad r^2 > (n_1+g) (3\alpha)^2 + n_1.\]
Robertson and Seymour showed~\cite{gm5} that the tree-width of graphs not having the grid $W_r$ as a minor is bounded as a function of $r$.
So we can pick $w=w(r)$ such that any graph of tree-width larger than $w$ has the $r$-grid as a minor.
We claim that Theorem~\ref{main} holds with this choice of~$w$.
For suppose not. 
Then there is a counterexample $G$, i.e.\ a graph with connectivity at least $3a+2$, minimum degree at least $\frac{31}{2}(a+1)-3$ and tree-width larger than $w$ containing neither $s$ disjoint $K_{a,k}$-minors nor a subdivision of $K_{a,sk}$.

By the choice of $w$ the graph $G$ has the $r$-grid as a minor.
We fix the branch sets of such a minor for the remainder of the proof.
Let $\mathcal{T}$ be the extension of its natural tangle to $G$.
Since $G$ does not contain $sK_{a,k}$ as a minor and $\ord\mathcal{T} = r\geq\theta$ we may apply Theorem~\ref{structure} to $G$ and $\mathcal{T}$.
Let all the notation be just as in the definition of near-embedding.

The following two claims are the analogues to \emph{Claim~5.1} and \emph{Claim~5.2}.
Keeping in mind that our small vortices play the same role as elementary reductions in \cite{kak}, the original proofs work in our setting as well.
\begin{clm}\label{apexnb}
	At most $g$ vertices of $G$ have $a$ or more neighbors in $A$.
\end{clm}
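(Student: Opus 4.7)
The plan is a direct pigeonhole argument on the $a$-element subsets of the apex set $A$. Suppose, for contradiction, that more than $g=(sk-1)\binom{\alpha}{a}$ vertices of $G$ have at least $a$ neighbors in $A$. For each such vertex $v$, choose an $a$-element subset $S_v \subseteq N(v) \cap A$. Since $|A| \leq \alpha$, there are at most $\binom{\alpha}{a}$ choices for $S_v$, so by pigeonhole some fixed $a$-set $S \subseteq A$ is selected for at least $\lceil (g+1)/\binom{\alpha}{a}\rceil = sk$ vertices.

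Call these $sk$ vertices $v_1,\dots,v_{sk}$. By construction every $v_i$ is adjacent to every element of $S$, and since no vertex is its own neighbor, $v_i \notin S$ for all $i$. Hence $S \cup \{v_1,\dots,v_{sk}\}$ spans a complete bipartite subgraph $K_{a,sk}$ in $G$, which is in particular a subdivision of $K_{a,sk}$, contradicting the assumption that $G$ is a counterexample to Theorem~\ref{main}.

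I do not foresee a genuine obstacle; the only thing to watch is that the $sk$ vertices are automatically distinct from $S$ (because $v \notin N(v)$), so the bipartition is honest. The choice $g=(sk-1)\binom{\alpha}{a}$ is designed precisely so that $g+1$ vertices force an $sk$-fold collision under the pigeonhole, with no slack to spare. This exactly parallels \emph{Claim~5.1} of~\cite{kak}; the only conceptual substitution is that where B\"ohme et al.\ invoke their enhanced structure theorem we use the apex bound of Theorem~\ref{structure}, but since the apex set plays the same role in both settings the argument transfers verbatim.
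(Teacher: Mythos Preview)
Your argument is correct and is precisely the pigeonhole-on-$a$-subsets argument that the paper has in mind: the paper gives no proof of its own here but defers to \emph{Claim~5.1} of~\cite{kak}, which is exactly what you reproduce. The one tacit case you might mention for completeness is $|A|<a$, where the claim is vacuous; otherwise nothing is missing.
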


\begin{clm}\label{fewsmallvortices}
	There are at most $g$ small vortices.
\end{clm}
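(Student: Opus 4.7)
My plan is to mirror the argument for \emph{Claim~5.2} of~\cite{kak}. I would show that the inner vertex sets of distinct small vortices are pairwise disjoint, that each small vortex contains an inner vertex with at least $a$ neighbours in the apex set $A$, and then invoke Claim~\ref{apexnb} to bound the total number of small vortices by $g$.

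First I would note that for any small vortex $W = G_i \in \W$, an inner vertex $v \in V(W) \setminus \Omega(W)$ lies in $V(G_i) \setminus V(G_0)$ and hence, by property~1 of a near-embedding ($G_i \cap G_j \subseteq G_0$ for $i \neq j$), is contained in no other $G_j$. Consequently, inner vertices of distinct small vortices are pairwise disjoint, and every neighbour of $v$ in $G$ lies in $V(W) \cup A$. In particular, $\Omega(W) \cup A$ separates $I(W) := V(W) \setminus \Omega(W)$ from $V(G) \setminus (V(W) \cup A)$ in $G$.

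The heart of the proof is to exhibit, for each $W \in \W$, an inner vertex $v_W \in I(W)$ with $|N(v_W) \cap A| \geq a$. This step is identical to the corresponding one for elementary reductions in~\cite{kak}: since $|\Omega(W)| \leq 3$, the separator just described combined with $\kappa(G) \geq 3a + 2$ forces a substantial number of apex vertices to be adjacent to $I(W)$, and a counting argument using the minimum-degree bound $\delta(G) \geq \frac{31}{2}(a+1) - 3$ then locates a single inner vertex with at least $a$ apex neighbours. Because the only feature of $W$ used is $|\Omega(W)| \leq 3$, which matches the bound available for an elementary reduction, the original proof carries over without modification.

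With $v_W$ in hand, the assignment $W \mapsto v_W$ is injective by the disjointness of inner vertices, so Claim~\ref{apexnb} immediately gives $|\W| \leq g$. The only genuine obstacle is the middle step; once the vertex-existence argument from~\cite{kak} is transferred, the conclusion follows from pigeonhole.
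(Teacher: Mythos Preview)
Your proposal is correct and matches the paper's approach exactly: the paper does not give an independent proof but simply notes that small vortices play the role of elementary reductions in~\cite{kak} and that the proof of \emph{Claim~5.2} there carries over verbatim. Your write-up supplies the one verification the paper leaves implicit---that property~1 of a near-embedding makes the inner vertices of distinct small vortices disjoint and confined to $V(W)\cup A$---and then defers to~\cite{kak} for the existence of $v_W$, which is precisely what the paper does.
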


We now want to find a large vortex whose society vertices send only few edges into $G_0$.
The first step towards this ``wide'' vortex is to show that the graph $G_0$ is large by applying Corollary~\ref{gridcutG} to the large grid minor in $G$.
Remember that, in the original proof, \emph{Theorem~4.2} ensured the existence of a large wall in the embedded graph, making the next claim trivial in that setting.

\begin{clm}\label{largeG0}
	$|G_0|> n_1$.
\end{clm}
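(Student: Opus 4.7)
The plan is to count the $r^2$ branch sets of the fixed $r$-grid minor of $G$ according to which region of the near-embedding they meet, and to bound the branch sets that avoid $V(G_0)$. Since branch sets are connected and each vortex interior $V(G_j)\setminus\Omega_j$ is separated from the rest of $G$ by $\Omega_j\cup A\subseteq V(G_0)\cup A$, every branch set falls into exactly one of four classes: (a) meets $A$; (b) avoids $A$ but meets $V(G_0)$; (c) avoids $A\cup V(G_0)$ and lies in the interior of a single large vortex; (d) avoids $A\cup V(G_0)$ and lies in the interior of a single small vortex. To conclude $|V(G_0)|>n_1$, it suffices to bound the combined size of classes (a), (c), (d) strictly below $r^2-n_1$.

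Class (a) has at most $|A|\leq\alpha$ members. For class (d), each small vortex $W$ together with $A$ is one side of a separation of $G$ whose separator $\Omega(W)\cup A$ has order at most $3+\alpha\leq 3\alpha$; the respect property excludes this side from being the large side of the corresponding separation in $\mathcal{T}\setminus A$, so by the tangle axioms it is the small side in $\mathcal{T}$, and Corollary~\ref{gridcutG} bounds the branch sets meeting $V(W)\cup A$ by $(3\alpha)^2$. Combined with Claim~\ref{fewsmallvortices} this gives class (d) $\leq g(3\alpha)^2$. For class (c), the same argument applies bag by bag: for each bag $X_i$ of a large vortex $G_j$, the internal vertices of $X_i$ have all their neighbours in $X_i\cup A$ by the path-decomposition property and the vortex structure, so $X_i\cup A$ is one side of a separation whose separator is $(X_i\cap(X_{i-1}\cup X_{i+1}))\cup A$ of order at most $2(q+1)+\alpha\leq 3\alpha$; this is the small side by respect, and meets at most $(3\alpha)^2$ branch sets by Corollary~\ref{gridcutG}. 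A class (c) branch set inside $V(G_j)\setminus\Omega_j$ is connected and therefore meets a consecutive interval of bags of $G_j$; assigning it to its first bag shows that at most $(3\alpha)^2$ class (c) branch sets correspond to each bag of $G_j$, hence at most $|\Omega_j|(3\alpha)^2$ in $G_j$ (since the number of bags equals $|\Omega_j|$). Summing over the large vortices, whose societies $\Omega_j\subseteq V(G_0)$ may be assumed pairwise disjoint after a mild normalisation of the near-embedding, yields class (c) $\leq |V(G_0)|(3\alpha)^2$.

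Writing $N\coloneq|V(G_0)|$ and noting that $N$ upper-bounds the size of class (b) (each such branch set contributes at least one vertex of $V(G_0)$), the partition of branch sets gives
\[N \geq r^2 - \alpha - g(3\alpha)^2 - N(3\alpha)^2,\]
so $N(1+(3\alpha)^2)\geq r^2-\alpha-g(3\alpha)^2$; the hypothesis $r^2>(n_1+g)(3\alpha)^2+n_1$ then forces $N>n_1$, as desired.

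The main obstacle I expect is the bag-by-bag bound for large vortices: one must verify that each bag really yields a valid separation of $G$ of order at most $3\alpha$, that respect forces it to be the small side in $\mathcal{T}$, and that the ``first bag'' assignment does not overcount branch sets. A secondary technical point is ensuring that the societies of distinct large vortices can be taken disjoint, which should follow from a harmless normalisation of the near-embedding.
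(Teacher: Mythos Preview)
Your approach is essentially the paper's: bound the branch sets meeting each small vortex and each large-vortex bag by $(3\alpha)^2$ via Corollary~\ref{gridcutG} and the respect property, then observe that the remaining branch sets lie in $G_0$. The paper phrases this as a contradiction from $|G_0|\le n_1$, you phrase it as a direct lower bound on $N=|G_0|$; this is the same argument.

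Two small points. First, your worry about disjoint societies is unnecessary: condition~2 in the definition of a near-embedding already states that the large vortices $G_1,\dots,G_{\alpha'}$ are pairwise disjoint, so $\sum_j|\Omega_j|\le|G_0|$ holds without any normalisation.

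Second, and more substantively, your final arithmetic does not quite close with the constants as stated. Separating out class~(a) costs you an extra $+\alpha$: from $N(1+(3\alpha)^2)\ge r^2-\alpha-g(3\alpha)^2$ and $N\le n_1$ you only get $r^2\le (n_1+g)(3\alpha)^2+n_1+\alpha$, which is \emph{not} contradicted by the hypothesis $r^2>(n_1+g)(3\alpha)^2+n_1$. The fix is to drop class~(a) altogether: your own bound for each small vortex and each bag already counts branch sets meeting $V(W)\cup A$ (respectively $X_i\cup A$), so any branch set meeting $A$ is already accounted for as soon as there is at least one small vortex or one bag; if there are none, then $G-A=G_0$ and the claim is trivial. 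With class~(a) absorbed you recover exactly the paper's inequality $r^2\le (n_1+g)(3\alpha)^2+n_1$, which contradicts the choice of~$r$.
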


\begin{proof}[Proof of Claim~\ref{largeG0}]
	Suppose not.
	Since large vortices are disjoint, there are at most $n_1$ society vertices of large vortices in $G_0$ and hence at most $n_1$ bags of large vortices in~$G$.
	Corollary~\ref{gridcutG} is our tool to count how many branch sets of $W_r$ are spread out over the vortices and the apex set.
	As it turns out, even together with the branch sets in $G_0$, this number is less than $r^2$, giving a contradiction:
	
	Any small vortex is separated from the rest of $G-A$ by its at most~$3$ society vertices.
	Clearly, $3 < 2\alpha < r - \alpha \leq \ord (\mathcal{T}\setminus A)$.
	So the separation specified above is in $\mathcal{T}\setminus A$, and, as the near-embedding respects the tangle $\mathcal{T}$, the vertex set of the small vortex is its small side.
	By the defintion of $\mathcal{T}\setminus A$ this means that adding the apex set on both sides of the separation gives a separation of $G$ which lies in $\mathcal{T}$.
	Then Corollary~\ref{gridcutG} ensures that at most $(3 + \alpha)^2 < (3\alpha)^2$ branch sets of $W_r$ have a vertex in this small vortex or the apex set.
	
	Similarly, any bag of the decomposition of a large vortex is separated by its at most two adhesion sets from the rest of $G-A$.
	Since each adhesion set has size at most $\alpha$, the same argument as above implies that at most $(3\alpha)^2$ branch sets of $W_r$ have a vertex in this bag or the apex set.
	Then by Claim~\ref{fewsmallvortices} and the choice of $r$, $G$ contains at most
	\[(n_1 + g) (3\alpha)^2 + n_1 < r^2\]
	branch sets of $W_r$, contradiction.
\end{proof}

We could end the patch here and let the proof of \emph{Claim~5.3} from~\cite{kak} take over.
But that proof is inaccurate---it does not take into account the non-essential society vertices.
In fact, the Euler formula argument from~\cite{kak} fails for the given definition of an essential vertex.

Here are the definitions we are going to use:
a society vertex $v\in \bigcup_{i=1}^n \Omega_i$ is called \emph{essential} if it has less than $7$ neighbors in $G_0$.
A large vortex $G_i$ is  $m$-\emph{wide} if its society contains at least $m$ essential vertices.
Unlike its counterpart in~\cite{kak} the latter definition does not require the existence of a comb with its teeth being essential society vertices in the right order.
This is because Theorem~\ref{structure} yields such a comb right away.
In particular, we need not apply Erd\H{o}s-Szekeres to construct it.

\begin{clm}\label{widevortex}
	There is an $n_2$-wide vortex.
\end{clm}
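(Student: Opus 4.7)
The plan is to combine an Euler-type edge bound for the embedded part $G_0$ with the global minimum-degree hypothesis on $G$ in order to locate many essential society vertices in a single large vortex. First I would exploit that $G_0$ embeds in $\Sigma$, a surface whose Euler genus is bounded in terms of $H$ (since $H$ itself does not embed in $\Sigma$). The standard Euler-formula bound $|E(G_0)| \le 3|V(G_0)| + c_H$ implies that the number of vertices $v \in V(G_0)$ with $d_{G_0}(v) \ge 7$ is at most $\tfrac{6}{7}|V(G_0)| + O_H(1)$, so together with Claim~\ref{largeG0} at least $\tfrac{1}{7}n_1 - O_H(1)$ vertices of $G_0$ have $G_0$-degree at most $6$.

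Next I would show that almost every such low-degree vertex is an essential society vertex of a large vortex. A vertex of $G_0$ either lies in some $\Omega_i$ (hence is a society vertex) or lies in no vortex at all. In the latter case, all of $v$'s edges in $G-A$ lie in $G_0$, hence $d_G(v) = d_{G_0}(v) + d_A(v)$; combining $d_{G_0}(v) \le 6$ with the minimum-degree hypothesis $\delta(G) \ge \tfrac{31}{2}(a+1) - 3$ forces $d_A(v) \ge a$, so by Claim~\ref{apexnb} at most $g$ such \emph{pure} low-degree vertices arise. Removing these together with the at most $3g$ society vertices contained in small vortices (Claim~\ref{fewsmallvortices}) leaves at least $\tfrac{1}{7}n_1 - 4g - O_H(1)$ essential society vertices inside large vortices. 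Since there are at most $\alpha$ large vortices, pigeonhole together with the chosen inequality on $n_1$, whose slack $2(a+1)g + ask$ comfortably absorbs all constants, forces some large vortex to carry at least $n_2$ essential society vertices.

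The main obstacle is setting up the Euler step correctly: one must justify $|E(G_0)| \le 3|V(G_0)| + c_H$ even though the embedding lives in $\Sigma$ minus the interiors of the discs $D_i$ (the bound still holds, as one may simply fill in the discs without adding edges), and one must verify that \emph{low $G_0$-degree} truly isolates essential society vertices---i.e.\ that the minimum-degree hypothesis is strong enough to rule out pure low-degree vertices outside the $g$ given by Claim~\ref{apexnb}. Once these ingredients are in place, the remainder is a routine averaging argument using $\alpha' \le \alpha$.
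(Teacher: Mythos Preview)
Your proposal is correct and follows essentially the same route as the paper: both combine the Euler bound for $G_0\hookrightarrow\Sigma$ (with genus $<ask$), Claim~\ref{apexnb} to dispose of non-society vertices of low $G_0$-degree, Claim~\ref{fewsmallvortices} to discard the $\le 3g$ small-vortex society vertices, and pigeonhole over the $\le\alpha$ large vortices. The only cosmetic difference is that the paper bounds $\|G_0\|$ from below using that all but $g$ non-society vertices have $G_0$-degree $\ge 14(a+1)$, whereas you count low-degree vertices directly and then eliminate the non-society ones; both arrive at $x>3g+(n_2-1)\alpha$ with the given choice of $n_1$.
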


\begin{proof}[Proof of Claim~\ref{widevortex}]
$G_0$ is embedded in $\Sigma$ and hence satisfies the general Euler formula (cf.~\cite[B.2]{diestel})
\[|G_0| - \|G_0\| + l = 2 - \varepsilon\]
where $l$ is the number of faces of the embedded graph and $\varepsilon$ is the Euler genus of $\Sigma$.
To eliminate $l$ and $\varepsilon$ from the equation use the inequalities $3l\leq 2\|G_0\|$ and $\varepsilon < ask$.
The former holds as each edge bounds at most two faces while each face is bounded by at least three edges.
The latter is true because $H$ cannot be embedded in the surface $\Sigma$ and $\|H\| = ask$.
Hence
\[\tag{1}|G_0| + ask > |G_0| + \varepsilon = 2 + \|G_0\| - l > \frac{1}{3}\|G_0\|.\]
Now partition the vertices of $G_0$ into three classes:
essential society vertices $X$, non-essential society vertices $Y$ and non-society vertices $Z$.
Denote the sizes of $X$, $Y$ and $Z$ by $x$, $y$ and $z$, respectively.
Then clearly
\[|G_0| = x + y + z.\]
The vertices in $Y$ each have at least $7$ neighbours in $G_0$.
By Claim~\ref{apexnb} at least $z-g$ vertices of $Z$ each send at least $\delta(G) - (a-1) \geq 14(a+1)$ edges into $G_0$.
In total we get
\[\tag{2}\frac{2}{7}\|G_0\|\geq y + 2(a+1)(z-g) \geq |G_0| - x - 2(a+1)g.\]
Together $(1)$ and $(2)$ imply
\[\frac{6}{7}\left(|G_0| + ask\right) > |G_0| - x - 2(a+1)g.\]
Since $G_0$ has at least $n_1$ vertices, $x$ is large enough to force one vortex to be $n_2$-wide by the choice of $n_1$:
\[x > \frac{1}{7}|G_0| -2(a+1)g - ask \geq 3g + (n_2-1)\alpha.\]
\end{proof}

Finally, by Claim~\ref{widevortex} one of the vortices, $G_j$ say, is $n_2$-wide.
By Theorem~\ref{structure} there is a comb $P_0$ which is disjoint to the linkage of $G_j$ and has teeth $\Omega_j$.
The rest works exactly as in the original proof.
In particular, our adjusted definition of an essential vertex causes no problems later on.

\end{document}